\newtheorem{thm}{Theorem}[section]
\newtheorem{cor}[thm]{Corollary}
\newtheorem{lem}[thm]{Lemma}
\newtheorem{prop}[thm]{Proposition}
\theoremstyle{definition}
\newtheorem{rem}[thm]{Remark}
\numberwithin{equation}{section}
\def\diag{{\rm diag}}
\begin{document}

\baselineskip=17pt

\title[Determinant inequality]{Extension of a result of Haynsworth and Hartfiel}

\author[M. Lin]{Minghua Lin}
\address{Department of Mathematics and Statistics\\
University of Victoria\\
 Victoria, BC, Canada, V8W 3R4.}
\email{mlin87@ymail.com}

\date{}

 \begin{abstract} About last 70s, Haynsworth \cite{Hay70} used a result of the Schur complement to refine a determinant inequality for positive definite matrices.   Haynsworth's result was improved by  Hartfiel \cite{Har73}. We extend their result to a larger class of matrices, namely, matrices whose numerical range is contained in a sector. Our proof relies on a number of new relations for the Schur complement of this class of matrices.
 \end{abstract}

\subjclass[2010]{15A45, 15A60}

\keywords{determinant inequality, numerical range, sector, Schur complement.
}

\maketitle

\section{Introduction}
\label{sec:intro}

 We start with the notation used in this paper. Let $\mathbb{M}_n$ be the set of all $n\times n$ complex matrices. For $A\in \mathbb{M}_n$, the conjugate transpose of $A$ is denoted by $A^*$, the  real and imaginary part of $A$ are in the sense of the Cartesian decomposition and they are denoted by $\Re A= \frac{1}{2} \left(A+ A^*\right)$ and $\Im A= \frac{1}{2i} \left(A - A^*\right)$, respectively. For two Hermitian matrices $A, B\in \mathbb{M}_n$, we write $A\ge B$ (or $B\le A$) to mean that $A-B$ is positive semidefinite.  We also consider $A\in \mathbb{M}_n$ to be partitioned as  \begin{eqnarray}\label{par}
A=\begin{bmatrix} A_{11} & A_{12}\\ A_{21}  &A_{22}\end{bmatrix},\end{eqnarray} where diagonal blocks are square matrices. If $A$ is nonsingular, then we partition $A^{-1}$ conformally as $A$. If $A_{11}$ is nonsingular, then the Schur complement of $A_{11}$ in $A$ is defined by $A/A_{11}=A_{22}-A_{21}A_{11}^{-1}A_{12}$. The term ``Schur complement" and the notation were first brought in by Haynsworth. We refer the readers to \cite{Zhang05} for a survey of this important notion and its far reaching applications in various branches of mathematics.

 Recall that the numerical range (also known as the field of values) of $A\in\mathbb{M}_n$ is defined by
\begin{eqnarray*} W(A) = \{x^*Ax : x\in  \mathbb{C}^n, x^*x = 1\}.
\end{eqnarray*} Also, we define a sector on the complex plane \begin{eqnarray*} S_\alpha=\{z\in \mathbb{C}: \Re z>0, |\Im z| \le (\Re z) \tan \alpha\}, \qquad \alpha\in [0, \pi/2).  \end{eqnarray*}
Clearly, if $A$ is positive definite, then $W(A)\subset S_0$.

For fundamentals of numerical range, see \cite{GR97, HJ91}. As $0\notin S_\alpha$, if $W(A)\subset S_\alpha$, then $A$ is necessarily nonsingular.

The main object we are dealing in this paper is a class of matrices whose numerical range is contained in $S_\alpha$. Part of the motivation for investigating this class of matrices comes from the search for the optimal growth factor in Gaussian elimination; see, for example,  \cite{Dru13a, GIK02, Hig98, LS14, Lin14}.

Let $A, B\in \mathbb{M}_n$ be positive definite. It is well known that
\begin{eqnarray}\label{e12} \det(A+B)\ge \det A+\det B.
\end{eqnarray}

 Haynsworth  proved the following refinement of (\ref{e12}).
 \begin{thm} \label{thm1} \cite[Theorem 3]{Hay70}  Suppose $A, B\in \mathbb{M}_n$ are positive definite.  Let $A_k$ and $B_k$, $k = 1, \ldots,  n-1$, denote the $k$-th principal submatrices of $A$ and $B$ respectively. Then
 \begin{eqnarray}\label{hay}     &&  \det(A+B)\ge\left(1+\sum_{k=1}^{n-1}\frac{\det B_k}{\det A_k}\right)\det A+\left(1+\sum_{k=1}^{n-1}\frac{\det A_k}{\det B_k}\right)\det B.
\end{eqnarray}
\end{thm}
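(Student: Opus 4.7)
The plan is to prove the inequality by induction on $n$. The base case $n=1$ is immediate: both indexing sums in (\ref{hay}) are empty, so the asserted inequality collapses to the identity $\det(A+B)=\det A+\det B$ for positive scalars.

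For the inductive step, I would partition each of $A$, $B$, and $A+B$ with leading $(n-1)\times(n-1)$ principal blocks $A_{n-1}$, $B_{n-1}$, and $A_{n-1}+B_{n-1}$. Two tools would enter the argument. The first is the multiplicative Schur identity
\begin{eqnarray*}
\det A=\det A_{n-1}\cdot (A/A_{n-1}),
\end{eqnarray*}
together with its analogues for $B$ and $A+B$; note that each such Schur complement is a positive scalar since the complementary block is $1\times 1$. The second is the classical subadditivity of the Schur complement on the positive definite cone,
\begin{eqnarray*}
(A+B)/(A_{n-1}+B_{n-1})\;\geq\;(A/A_{n-1})+(B/B_{n-1}).
\end{eqnarray*}
Multiplying through by the positive quantity $\det(A_{n-1}+B_{n-1})$ yields
\begin{eqnarray*}
\det(A+B)\;\geq\;\det(A_{n-1}+B_{n-1})\bigl[(A/A_{n-1})+(B/B_{n-1})\bigr].
\end{eqnarray*}

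I would then invoke the inductive hypothesis on the $(n-1)$-dimensional pair $A_{n-1},B_{n-1}$ to bound $\det(A_{n-1}+B_{n-1})$ below by the analogous expression with $k$ ranging over $1$ to $n-2$. Expanding the resulting product and using the Schur identity to rewrite $\det A_{n-1}\cdot(A/A_{n-1})$ as $\det A$ (and symmetrically), I expect to recover precisely the right-hand side of (\ref{hay}) together with two leftover cross summands of the form $(A/A_{n-1})\det B_{n-1}\sum_{k=1}^{n-2}\det A_k/\det B_k$ and its $A\leftrightarrow B$ symmetric counterpart, both manifestly nonnegative. Matching the terms is a routine algebraic check.

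The main obstacle is the Schur complement subadditivity used above; this is the genuine analytic content of the proof. It can be derived from the variational representation of $A/A_{n-1}$ as the minimum of a quadratic form over the complementary coordinates, combined with the elementary inequality $\min(f+g)\geq \min f+\min g$; equivalently, it follows from joint concavity of the map $A\mapsto A/A_{n-1}$ on the positive definite cone. Once this lemma is granted, the rest of the induction is bookkeeping, and the slack observed in the cross terms suggests that the bound (\ref{hay}) is itself not tight — a point the paper presumably exploits in extending to matrices with numerical range in a sector.
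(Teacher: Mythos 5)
Your proof is correct, and it is essentially the approach the paper itself points to: the paper quotes this theorem from \cite{Hay70} without reproving it, but explicitly notes that Haynsworth's proof rests on the Schur complement superadditivity inequality (\ref{schur-pd}), which is exactly the key lemma in your induction (and the same telescoping of $\det A = \det A_{n-1}\cdot(A/A_{n-1})$ reappears in the paper's proof of its Theorem \ref{main2}). Your bookkeeping checks out, and your closing observation about the discarded cross terms is consistent with the fact that (\ref{hay}) admits Hartfiel's strengthening (\ref{har}).
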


  Hartfiel \cite{Har73} obtained an improvement of  (\ref{hay}): under the same condition as in Theorem \ref{thm1},
   \begin{eqnarray}\label{har}  \begin{aligned} \det(A+B) &\ge\left(1+\sum_{k=1}^{n-1}\frac{\det B_k}{\det A_k}\right) \det A +\left(1+\sum_{k=1}^{n-1}\frac{\det A_k}{\det B_k}\right)\det B \\ & \qquad +(2^n-2n)\sqrt{\det AB}. \end{aligned}
\end{eqnarray}

  Haynsworth's proof of (\ref{hay}) relies on an inequality for the Schur complement \cite[Theorem 2]{Hay70}:  Let $A, B\in  \mathbb{M}_n$ be positive definite and be comformally partitioned as in (\ref{par}). Then
   \begin{eqnarray}\label{schur-pd}  (A+B)/(A_{11}+B_{11})\ge A/A_{11} +B/B_{11}.
\end{eqnarray}

In this paper, we first extend (\ref{schur-pd}), then as an application, we obtain a generalization of (\ref{har}) and so (\ref{hay}).

\section{Preliminaries}
The larger class of matrices dealt in this paper has some nice closure properties, just like the class of positive definite matrices.  For example, the following proposition says that the Schur complement is closed.

\begin{prop} \label{prop1} Let  $A\in \mathbb{M}_n$ be partitioned as in (\ref{par}). If $W(A)\subset S_\alpha$, then $W(A/A_{11})\subset S_\alpha$ as well. \end{prop}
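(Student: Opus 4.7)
The plan is to exploit the cone structure of $S_\alpha$ together with the classical identity that relates a quadratic form in the Schur complement to a quadratic form in the full matrix.

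First, I would dispatch the nonsingularity of $A_{11}$ quickly: because the numerical range of a principal submatrix is contained in the numerical range of the whole (take unit vectors $x$ supported on the first block of indices), we have $W(A_{11}) \subset W(A) \subset S_\alpha$, and since $0 \notin S_\alpha$, $A_{11}$ is invertible, so $A/A_{11}$ is well defined.

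Next, for the core computation, fix any nonzero $y \in \mathbb{C}^{n-k}$ (where $A_{11}$ is $k \times k$) and define
\begin{equation*}
x = \begin{bmatrix} -A_{11}^{-1} A_{12}\, y \\ y \end{bmatrix} \in \mathbb{C}^n.
\end{equation*}
A direct multiplication gives $Ax = \begin{bmatrix} 0 \\ (A/A_{11})y \end{bmatrix}$, and hence
\begin{equation*}
x^* A x = y^* (A/A_{11}) y.
\end{equation*}
Since the bottom block of $x$ is $y \neq 0$, we have $x \neq 0$, so $x^*Ax / x^*x \in W(A) \subset S_\alpha$.

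Finally I would invoke the fact that $S_\alpha$ is a convex cone closed under multiplication by positive reals: the two positive scalars $x^*x$ and $1/y^*y$ do not move $x^*Ax$ out of $S_\alpha$, so
\begin{equation*}
\frac{y^* (A/A_{11}) y}{y^* y} = \frac{x^* x}{y^* y}\cdot \frac{x^* A x}{x^* x} \in S_\alpha.
\end{equation*}
Since this holds for every nonzero $y$, we conclude $W(A/A_{11}) \subset S_\alpha$. There isn't really a hard step; the only thing to keep in mind is to verify that the constructed $x$ is nonzero (ensured by the identity bottom block) and that $S_\alpha$ is invariant under positive scaling, both of which are immediate from the definitions.
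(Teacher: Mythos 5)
Your proof is correct, and it takes a genuinely different route from the paper's. The paper argues through inverses: it notes that containment of the numerical range in $S_\alpha$ is preserved under taking adjoints, under congruence (so that $W(A^{-1})=W(AA^{-1}A^*)$ lands in $S_\alpha$), and then invokes the identity $(A/A_{11})^{-1}=(A^{-1})_{22}$ to conclude. You instead work directly with quadratic forms, using the classical observation that the vector $x=\bigl[\,(-A_{11}^{-1}A_{12}y)^{\mathsf{T}}\;\; y^{\mathsf{T}}\,\bigr]^{\mathsf{T}}$ satisfies $x^*Ax=y^*(A/A_{11})y$, and then exploiting only the fact that $S_\alpha$ is a cone (invariant under positive scaling). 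Your route is more elementary and self-contained: it avoids the inverse-block identity, avoids needing $W(A^{-1})\subset S_\alpha$, and in fact sidesteps a slight looseness in the paper, whose assertion ``$W(A)=W(XAX^*)$ for nonsingular $X$'' is not literally true (the numerical range is only unitarily invariant; what is preserved under congruence is the generated cone, which is all that is needed there and all that your argument uses). What the paper's approach buys in exchange is a package of closure facts (for $A^*$, $A^{-1}$, and diagonal blocks) that it reuses implicitly elsewhere, whereas your argument proves exactly the stated proposition with minimal machinery. One small point worth making explicit in your write-up: the inclusion $W(A_{11})\subset W(A)$ you use for nonsingularity of $A_{11}$ is obtained by restricting to unit vectors supported on the first block, exactly as you say, so that part is fine.
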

\begin{proof} Clearly, if $W(A)\subset S_\alpha$, then $W(A^*)\subset S_\alpha$ and $W(A_{22})\subset S_\alpha$. Also, for any nonsingular $X\in \mathbb{M}_n$, $W(A)=W(XAX^*)$. Therefore,  $W(A^{-1})=W(AA^{-1}A^*)=W(A^*)\subset S_\alpha$. The desired result follows by observing that $(A/A_{11})^{-1}=(A^{-1})_{22}$.  \end{proof}

 In the remaining of this section, we present a few auxiliary results.

 \begin{lem} \label{lem1}  Let  $A\in \mathbb{M}_n$ with $W(A)\subset S_\alpha$. Then $A$ can be decomposed as $A=XZX^*$
 for some invertible $X\in \mathbb{M}_n$ and $Z=\diag(e^{i\theta_1}, \ldots, e^{i\theta_n})$ with $|\theta_j|\le \alpha$ for all $j$.\end{lem}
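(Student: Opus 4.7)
The plan is to exploit a Cartesian-style factorization of $A$ and reduce the question to a statement about a single Hermitian matrix.

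First I would observe that $W(A)\subset S_\alpha$ forces $\Re A$ to be positive definite: for any unit $x$, $\Re(x^*Ax)=x^*(\Re A)x$, and the sector condition gives $x^*(\Re A)x>0$. Moreover, the inequality $|\Im z|\le(\Re z)\tan\alpha$ applied pointwise to $W(A)$ translates into the matrix inequalities
\[
-\tan\alpha\cdot \Re A\;\le\;\Im A\;\le\;\tan\alpha\cdot \Re A.
\]

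Next I would pick an invertible $Y\in\mathbb{M}_n$ with $\Re A=YY^*$ (e.g.\ via Cholesky). Writing $A=\Re A+i\,\Im A$, I factor out $Y$ on the left and $Y^*$ on the right to get
\[
A\;=\;Y\bigl(I+iH\bigr)Y^*,\qquad H:=Y^{-1}(\Im A)Y^{-*}.
\]
The matrix $H$ is Hermitian, and the displayed matrix inequalities above, after conjugating by $Y^{-1}$ and $Y^{-*}$, become $-\tan\alpha\cdot I\le H\le\tan\alpha\cdot I$. Diagonalizing $H=UDU^*$ with $U$ unitary and $D=\diag(d_1,\dots,d_n)$, this means $|d_j|\le\tan\alpha$ for every $j$.

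Then $A=YU(I+iD)U^*Y^*$. I would write each eigenvalue of $I+iD$ in polar form, $1+id_j=r_je^{i\theta_j}$ with $r_j=\sqrt{1+d_j^2}>0$ and $\theta_j=\arctan d_j$; the bound $|d_j|\le\tan\alpha$ gives $|\theta_j|\le\alpha$. Absorbing the positive scalings $\sqrt{r_j}$ symmetrically, I set
\[
X\;:=\;YU\,\diag(\sqrt{r_1},\dots,\sqrt{r_n}),\qquad Z\;:=\;\diag(e^{i\theta_1},\dots,e^{i\theta_n}),
\]
which yields $A=XZX^*$ with $X$ invertible (product of invertible factors) and $Z$ of the required diagonal form.

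The main potential obstacle is the translation of the numerical-range condition into an operator inequality that survives conjugation by $Y^{-1}$; but once one recognizes that $W(A)\subset S_\alpha$ is equivalent to the two-sided bound $\pm\Im A\le\tan\alpha\cdot\Re A$, everything reduces to the spectral theorem for the single Hermitian matrix $H$, so the rest is routine.
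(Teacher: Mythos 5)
Your proof is correct. The paper itself gives no proof of this lemma, only a citation to Drury's Lemma 1.1, and your argument (reduce to $\pm\Im A\le\tan\alpha\cdot\Re A$, congruence by a square root of $\Re A$ to reach $I+iH$ with $H$ Hermitian, then the spectral theorem and polar form of $1+id_j$) is essentially the standard one given there, so there is nothing to flag.
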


 \begin{rem} The decomposition appears first in \cite[Lemma 1.1]{Dru13a}. In \cite{Zha14}, it is shown that the diagonal entries of $Z$ are unique up to permutation.  \end{rem}

 \begin{lem} \label{lem2}  Let  $A\in \mathbb{M}_n$ with $\Re A$ positive definite.  Then $$(\Re A)^{-1}\ge \Re (A^{-1}).$$ \end{lem}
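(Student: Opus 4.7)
The plan is to use the Cartesian decomposition explicitly. Write $A = H + iK$ where $H = \Re A$ is positive definite and $K = \Im A$ is Hermitian. Since $H > 0$ and $K = K^*$, the matrix $H \pm iK$ is invertible (its numerical range avoids $0$), so $A^{-1}$ makes sense and
\[
\Re(A^{-1}) = \tfrac{1}{2}\bigl((H+iK)^{-1} + (H-iK)^{-1}\bigr).
\]

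The first key step is to put this in closed form. Writing the sum over a common ``denominator'' on both sides gives
\[
(H+iK)^{-1} + (H-iK)^{-1} = (H+iK)^{-1}\bigl[(H-iK)+(H+iK)\bigr](H-iK)^{-1} = 2\,(H+iK)^{-1} H (H-iK)^{-1},
\]
so $\Re(A^{-1}) = M^{-1} H (M^{-1})^*$ where $M := H+iK$ (note $M^* = H - iK$).

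The second step reduces the desired inequality $H^{-1} \ge M^{-1} H (M^{-1})^*$ by $\ast$-congruence with the invertible matrix $M$: it is equivalent to
\[
M H^{-1} M^* \ge M\bigl(M^{-1} H (M^{-1})^*\bigr) M^* = H,
\]
that is, $(H+iK) H^{-1} (H-iK) \ge H$.

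The third step is a direct expansion:
\[
(H+iK)H^{-1}(H-iK) = H - iK + iK + K H^{-1} K = H + K H^{-1} K.
\]
Thus the inequality to prove is simply $K H^{-1} K \ge 0$, which is immediate because for any $x$,
\[
x^*\,K H^{-1} K\, x = (Kx)^* H^{-1} (Kx) \ge 0,
\]
using that $K$ is Hermitian and $H^{-1}$ is positive definite. There is no real obstacle here; the only thing to get right is the algebraic manipulation in the first step, which must avoid implicitly assuming $H$ and $K$ commute — and indeed the identity $(H+iK)^{-1}+(H-iK)^{-1} = 2(H+iK)^{-1}H(H-iK)^{-1}$ holds without any commutativity, since one simply writes both summands over the common right/left factors $M$ and $M^*$.
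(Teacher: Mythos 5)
Your proof is correct. Every step checks out: the identity $\Re(A^{-1})=\tfrac12\bigl((H+iK)^{-1}+(H-iK)^{-1}\bigr)=M^{-1}H(M^{-1})^*$ with $M=H+iK$ holds without any commutativity assumption, the reduction by $*$-congruence with the invertible $M$ is legitimate since congruence preserves the semidefinite order, and the expansion $(H+iK)H^{-1}(H-iK)=H+KH^{-1}K$ leaves only the trivial inequality $KH^{-1}K\ge 0$. The paper instead quotes Mathias's closed form $\Re(A^{-1})=\bigl(\Re A+(\Im A)(\Re A)^{-1}(\Im A)\bigr)^{-1}$ and then applies the antitonicity of matrix inversion ($0<X\le Y$ implies $Y^{-1}\le X^{-1}$). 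Your route is essentially a self-contained derivation of that same formula: from $\Re(A^{-1})=M^{-1}H(M^*)^{-1}$ one gets $\bigl(\Re(A^{-1})\bigr)^{-1}=M^*H^{-1}M=H+KH^{-1}K$, which is exactly Mathias's identity. What your version buys is independence from the citation and from inverse-antitonicity, replacing both with a single congruence and a two-line computation; what the paper's version buys is brevity and a reusable explicit formula for $\Re(A^{-1})$ that is of independent interest elsewhere in this literature.
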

 \begin{proof} By \cite[Lemma 2.1]{Mat92},  $\Re (A^{-1})=\Big(\Re A+(\Im A)(\Re A)^{-1}(\Im A)\Big)^{-1}$. As $(\Im A)(\Re A)^{-1}(\Im A)$ is positive semidefinite, $\Re (A^{-1})\le (\Re A)^{-1}$ follows.  \end{proof}

 \begin{lem} \label{lem3}  Let  $A\in \mathbb{M}_n$ be partitioned as in (\ref{par}). If $\Re A$ is positive definite, then $$\Re (A/A_{11})\ge (\Re A)/(\Re A_{11}).$$ \end{lem}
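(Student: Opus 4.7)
My plan is to reduce the inequality to the variational characterization of the Schur complement of a positive definite matrix. Write $H := \Re A$, which by hypothesis is positive definite, so $H_{11}$ is positive definite as well and $H/H_{11}$ makes sense. The standard identity
\[
\begin{bmatrix} V \\ I \end{bmatrix}^* H \begin{bmatrix} V \\ I \end{bmatrix} - H/H_{11} = (V + H_{11}^{-1} H_{12})^* H_{11} (V + H_{11}^{-1} H_{12}) \ge 0
\]
shows that for any matrix $W$ of the block-column form $W = \begin{bmatrix} V \\ I \end{bmatrix}$, one has $W^* H W \ge H/H_{11}$, with equality when $V = -H_{11}^{-1}H_{12}$.

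The key step is to apply this inequality to the particular choice $X := \begin{bmatrix} -A_{11}^{-1} A_{12} \\ I \end{bmatrix}$, where the top block is built from $A$ rather than from $H$. A direct block multiplication yields $AX = \begin{bmatrix} 0 \\ A/A_{11} \end{bmatrix}$ and hence $X^* A X = A/A_{11}$.

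Taking Hermitian parts now does all the work. For any $X \in \mathbb{M}_{n,m}$ and $A \in \mathbb{M}_n$ one has $\Re(X^*AX) = \tfrac12(X^*AX + X^*A^*X) = X^*(\Re A) X = X^* H X$. Combining this with the previous step gives $\Re(A/A_{11}) = X^* H X$, and the variational lower bound yields
\[
\Re(A/A_{11}) = X^* H X \ge H/H_{11} = (\Re A)/(\Re A_{11}),
\]
which is the desired conclusion.

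The only subtle point, which I view as the main (mild) obstacle, is recognizing that the auxiliary matrix $X$ should be constructed from $A$ itself and not from its Hermitian part; this is exactly what is needed to make $X^*AX$ collapse to $A/A_{11}$, after which positive definiteness of $\Re A$ and the familiar variational inequality finish the argument with no further calculation.
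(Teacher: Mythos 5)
Your proof is correct, but it takes a genuinely different route from the paper. The paper invokes an identity from \cite[Lemma 2.2]{Lin12} expressing $A/A_{11}$ as $M/M_{11}+i(N/N_{11})+Y(M_{11}^{-1}-iN_{11}^{-1})^{-1}Y^*$ with $M=\Re A$, $N=\Im A$, and then argues that the real part of the correction term is positive semidefinite. You instead use the variational (extremal) characterization of the Schur complement of the Hermitian matrix $H=\Re A$, combined with the observation that the particular block column $X=\bigl[\begin{smallmatrix}-A_{11}^{-1}A_{12}\\ I\end{smallmatrix}\bigr]$ satisfies $X^*AX=A/A_{11}$ and $\Re(X^*AX)=X^*HX$. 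Your computations check out: $AX=\bigl[\begin{smallmatrix}0\\ A/A_{11}\end{smallmatrix}\bigr]$, the invertibility of $A_{11}$ follows from positive definiteness of $\Re A_{11}=(\Re A)_{11}$, and the completed-square identity gives $X^*HX\ge H/H_{11}$. Your argument is more elementary and self-contained (it needs no external identity, and in particular never requires $\Im A_{11}$ to be invertible, an implicit hypothesis in the cited formula), while the paper's approach yields an exact expression for the difference $A/A_{11}-\bigl(M/M_{11}+i(N/N_{11})\bigr)$, which is more information than the inequality alone.
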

 \begin{proof} The notation $(\Re A)/(\Re A_{11})$ makes sense as $\Re A_{11}$ is the $(1,1)$ block of $\Re A$. Consider the Cartesian decomposition $A=M+iN$ with $M=\Re A$, $N=\Im A$ being conformally partitioned as $A$. Then we have the following equality relating the Schur complements \cite[Lemma 2.2]{Lin12},
 \begin{eqnarray*}A/A_{11}=M/M_{11}+i(N/N_{11})+Y(M_{11}^{-1}-iN_{11}^{-1})^{-1}Y^*,
\end{eqnarray*} where $Y=M_{21}M_{11}^{-1}-N_{21}N_{11}^{-1}$.

 As $\Re \Big((M_{11}^{-1}-iN_{11}^{-1})^{-1}\Big)$ is positive semidefinite, so is $\Re \Big(Y(M_{11}^{-1}-iN_{11}^{-1})^{-1}Y^*\Big)$. The desired result follows.
   \end{proof}

 \begin{lem} \label{lem4}  Let  $A\in \mathbb{M}_n$ with $W(A)\subset S_\alpha$.   Then $$\sec^{n}(\alpha)\det(\Re A)\ge |\det A|.$$ \end{lem}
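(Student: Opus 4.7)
The plan is to reduce everything to the canonical decomposition provided by Lemma \ref{lem1}. Write $A = XZX^*$ with $X$ invertible and $Z = \diag(e^{i\theta_1},\ldots,e^{i\theta_n})$, $|\theta_j| \le \alpha$. Taking determinants immediately gives
\[
\det A = |\det X|^2 \cdot \prod_{j=1}^n e^{i\theta_j}, \qquad \text{hence} \qquad |\det A| = |\det X|^2.
\]

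Next I would compute the real part. Since $X$ and $X^*$ factor out of the Hermitianization,
\[
\Re A = \tfrac{1}{2}(XZX^* + XZ^*X^*) = X(\Re Z)X^* = X \, \diag(\cos\theta_1,\ldots,\cos\theta_n)\, X^*,
\]
so $\det(\Re A) = |\det X|^2 \prod_{j=1}^n \cos\theta_j$. Note $\Re A$ is positive definite (because $W(A)\subset S_\alpha$ forces $\Re(x^*Ax)>0$ for every unit vector $x$), so this determinant is positive; in particular each $\cos\theta_j$ is positive, consistent with $|\theta_j| \le \alpha < \pi/2$.

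Finally, since the function $\cos$ is decreasing on $[0,\pi/2)$ and even, $|\theta_j|\le \alpha$ yields $\cos\theta_j \ge \cos\alpha > 0$ for every $j$. Multiplying these bounds gives $\prod_j \cos\theta_j \ge \cos^n\alpha$, whence
\[
\det(\Re A) = |\det X|^2 \prod_{j=1}^n \cos\theta_j \ge \cos^n(\alpha)\, |\det X|^2 = \cos^n(\alpha)\, |\det A|,
\]
which rearranges to the claimed inequality $\sec^n(\alpha)\det(\Re A) \ge |\det A|$.

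There is essentially no obstacle here beyond invoking Lemma \ref{lem1}; the only thing to verify carefully is that $\Re(XZX^*) = X(\Re Z)X^*$, which holds because $X$ appears as a congruence and commutes through the averaging of $Z$ and $Z^*$. The sharpness of the bound is also transparent from this argument: equality holds when all $\theta_j = \pm\alpha$.
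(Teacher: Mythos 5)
Your proof is correct and is essentially the paper's own argument: both reduce via the decomposition $A=XZX^*$ of Lemma \ref{lem1} to the observation that each diagonal entry of $\sec(\alpha)\Re Z$ is at least one, so that $\det(\Re Z)\ge\cos^n(\alpha)$ while $|\det Z|=1$. You merely spell out the intermediate identities $\Re A = X(\Re Z)X^*$ and $|\det A|=|\det X|^2$ that the paper leaves implicit.
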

 \begin{proof} Consider the decomposition $A=XZX^*$ as in Lemma \ref{lem1}. Then after dividing by $|\det X|^2$, it suffices to show $\sec^{n}(\alpha)\det(\Re Z)\ge 1$. But each diagonal entry of the diagonal matrix $\sec(\alpha)\Re Z$ is no less than one, implying the result.  \end{proof}

  \begin{rem} The above inequality may be regarded as a complement of the  Ostrowski-Taussky inequality (see \cite[p. 510]{HJ13}). With some minor modification in the proof of \cite[Lemma 3.1]{Zha14}, Zhang showed that actually  the eigenvalues of $\sec(\alpha)\Re Z$ weakly log majorize the singular values of $A$.   \end{rem}

 \section{An extension of (\ref{schur-pd})}
 First of all, we remark that a direct extension of (\ref{schur-pd}) is not valid. That is, assuming $A, B\in \mathbb{M}_n$ with $W(A), W(B)\subset S_\alpha$ are comformally partitioned as in (\ref{par}), it does not hold in general that
 \begin{eqnarray}\label{schur-wrongsec} \Re \Big((A+B)/(A_{11}+B_{11})\Big)\ge \Re (A/A_{11}) +\Re (B/B_{11}).
\end{eqnarray}

 To see this, take $B=A^*$, then (\ref{schur-wrongsec}) contradicts Lemma \ref{lem3}.

 The main result of this section is a correct version of (\ref{schur-wrongsec}).

 \begin{thm} \label{main1}  Let $A, B\in \mathbb{M}_n$ with $W(A), W(B)\subset S_\alpha$ be comformally partitioned as in (\ref{par}). Then  \begin{eqnarray*}\label{schur-sec} ~ ~  \sec^2(\alpha)\Re\Big((A+B)/(A_{11}+B_{11})\Big)\ge\Re  (A/A_{11})+\Re (B/B_{11}).
\end{eqnarray*} \end{thm}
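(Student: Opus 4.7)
The plan is to bridge the target inequality with the positive definite Haynsworth inequality (\ref{schur-pd}) by passing through the real parts $\Re A$ and $\Re B$. Lemma \ref{lem3} already supplies $\Re(A/A_{11})\ge (\Re A)/(\Re A_{11})$ in one direction; the key new ingredient I will need is a matching reverse inequality that costs exactly the factor $\sec^2\alpha$ appearing in the statement.

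The first step, and the technical heart of the proof, is to establish the auxiliary fact
\[
\sec^2(\alpha)\,(\Re A)/(\Re A_{11})\ \ge\ \Re(A/A_{11})
\]
for every $A\in\mathbb{M}_n$ with $W(A)\subset S_\alpha$. Using the decomposition $A=XZX^*$ from Lemma \ref{lem1}, I note that $\Re A=X(\Re Z)X^*$, $\Re(A^{-1})=X^{-*}(\Re Z)X^{-1}$, and $(\Re A)^{-1}=X^{-*}(\Re Z)^{-1}X^{-1}$; the constraint $|\theta_j|\le \alpha$ forces $\cos^2\theta_j\ge \cos^2\alpha$, and a diagonal comparison then gives $\sec^2(\alpha)\,\Re(A^{-1})\ge (\Re A)^{-1}$. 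Taking $(2,2)$-principal blocks, together with the identities $(A/A_{11})^{-1}=(A^{-1})_{22}$ and $((\Re A)/(\Re A_{11}))^{-1}=((\Re A)^{-1})_{22}$, followed by Lemma \ref{lem2} applied to $A/A_{11}$ (which again has numerical range in $S_\alpha$ by Proposition \ref{prop1}) and a final inversion of positive definite matrices, yields the displayed claim.

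With this auxiliary bound in hand, the main proof is three short steps. Since $W(A+B)\subset S_\alpha$ (the sector is a convex cone), Lemma \ref{lem3} applied to $A+B$ gives
\[
\Re\!\left((A+B)/(A_{11}+B_{11})\right)\ \ge\ (\Re A+\Re B)/(\Re A_{11}+\Re B_{11}).
\]
Applying (\ref{schur-pd}) to the positive definite pair $\Re A,\,\Re B$ yields
\[
(\Re A+\Re B)/(\Re A_{11}+\Re B_{11})\ \ge\ (\Re A)/(\Re A_{11})+(\Re B)/(\Re B_{11}).
\]
Multiplying through by $\sec^2\alpha$ and invoking the auxiliary inequality on each of the two summands on the right produces the asserted bound.

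The main obstacle is the auxiliary reverse inequality itself; the remark preceding the theorem (taking $B=A^*$) already shows that no such comparison can hold without the $\sec^2\alpha$ correction, so Lemma \ref{lem1} must be used in an essential way. That lemma is precisely what makes the sector angle visible at the scalar level, so that the factor $\sec^2\alpha$ emerges cleanly from the elementary estimate $\cos^2\theta_j\ge \cos^2\alpha$.
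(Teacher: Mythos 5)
Your proposal is correct and follows essentially the same route as the paper: the identical auxiliary inequality $\sec^2(\alpha)\,(\Re A)/(\Re A_{11})\ge \Re(A/A_{11})$ proved via the $A=XZX^*$ decomposition, the block-inverse identities, and Lemma \ref{lem2}, then the same three-step chain through Lemma \ref{lem3} and (\ref{schur-pd}). The only cosmetic difference is that you state the diagonal comparison as a full-matrix inequality $\sec^2(\alpha)\,\Re(A^{-1})\ge(\Re A)^{-1}$ (using $\Re(Z^{-1})=\Re Z$) before extracting the $(2,2)$ blocks, whereas the paper conjugates the diagonal inequality by $Y_2$ directly; these are equivalent.
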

  \begin{proof} We prove the following claim first, which may be regarded as a reverse complement of Lemma \ref{lem3}.

~

  {\bf Claim 1.}  $\sec^2(\alpha)(\Re A)/(\Re A_{11}) \ge \Re (A/A_{11})$.

~

  {\it Proof of Claim 1.} We consider the decomposition $A=XZX^*$ as in Lemma \ref{lem1}. We further partition $X$ as a $2$-by-$1$ block matrix $X= \begin{bmatrix}X_1 \\ X_2\end{bmatrix}$. Then $A=\begin{bmatrix} X_1ZX_1^* & X_1ZX_2^*  \\ X_2ZX_1^*  &  X_2ZX_2^*\end{bmatrix}$. Let $Y=(X^*)^{-1}=\begin{bmatrix}Y_1 \\ Y_2\end{bmatrix}$ be comformally partitioned as $X$. Then $A^{-1}=\begin{bmatrix} Y_1Z^{-1}Y_1^* & Y_1Z^{-1}Y_2^*  \\ Y_2Z^{-1}Y_1^*  &  Y_2Z^{-1}Y_2^*\end{bmatrix}$. Clearly, \begin{eqnarray*}\cos^2(\alpha)(\Re Z)^{-1}\le \Re (Z^{-1}),
\end{eqnarray*}
it follows that
 \begin{eqnarray*}\cos^2(\alpha)Y_2(\Re Z)^{-1}Y_2^*\le \Re (Y_2Z^{-1}Y_2^*),
\end{eqnarray*} i.e.,
\begin{eqnarray*}\cos^2(\alpha) \Big((\Re A)^{-1}\Big)_{22}\le  \Re (A^{-1})_{22},
\end{eqnarray*}
or \begin{eqnarray*}\cos^2(\alpha) \Big((\Re A)/(\Re A_{11})\Big)^{-1}\le  \Re \Big((A/A_{11})^{-1}\Big).
\end{eqnarray*}
Taking the inverse on both hand sides yields \begin{eqnarray*}\sec^2(\alpha) \Big((\Re A)/(\Re A_{11})\Big)\ge  \Big(\Re \Big((A/A_{11})^{-1}\Big)\Big)^{-1}\ge \Re (A/A_{11}),
\end{eqnarray*} in which the second inequality is by Lemma  \ref{lem2}. This completes the proof of Claim 1.

To finish the proof of Theorem \ref{main1}, we observe the following chain of inequalities
\begin{eqnarray*}\Re\Big((A+B)/(A_{11}+B_{11})\Big)&\ge& \Re(A+B)/\Re (A_{11}+B_{11}) \quad \text{by Lemma  \ref{lem3}} \\
 &\ge& (\Re A)/(\Re A_{11})+(\Re B)/(\Re B_{11}) ~ \quad \text{by (\ref{schur-pd})} \\ &\ge & \cos^2(\alpha) \Big(\Re(A/A_{11})+\Re (B/B_{11})\Big)  ~ \text{by Claim 1.}
\end{eqnarray*}
 \end{proof}

  \section{An extension of (\ref{har})}
  As an applicaton of Theorem 3.1, we present the following extension of Haynsworth and Hartfiel's result mentioned in the Introduction.

   \begin{thm} \label{main2}  Suppose $A, B\in \mathbb{M}_n$ such that $W(A), W(B)\subset S_\alpha$.  Let $A_k$ and $B_k$, $k = 1, \ldots,  n-1$, denote the $k$-th principal submatrices of $A$ and $B$ respectively. Then
  \begin{eqnarray*}\sec^{3n-2}(\alpha)|\det(A+B)|&\ge&\left(1+\sum_{k=1}^{n-1}\left|\frac{\det B_k}{\det A_k}\right|\right) |\det A| \\&&  +\left(1+\sum_{k=1}^{n-1}\left|\frac{\det A_k}{\det B_k}\right|\right)|\det B|+(2^n-2n)\sqrt{|\det AB|}.
\end{eqnarray*}
\end{thm}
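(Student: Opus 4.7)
I would prove the inequality by induction on $n$, mimicking Hartfiel's proof of (\ref{har}) but using Theorem \ref{main1} in the role there played by (\ref{schur-pd}), while carefully tracking factors of $\cos\alpha$. The base case $n=1$ amounts to showing $\sec(\alpha)|a+b|\ge |a|+|b|$ for scalars $a,b\in S_\alpha$; this follows from $|a+b|\ge \Re(a+b)=\Re a+\Re b\ge \cos(\alpha)(|a|+|b|)$.

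For the inductive step, partition $A$ and $B$ so that the $(1,1)$ blocks are the leading $(n-1)\times(n-1)$ principal submatrices $A_{n-1}$ and $B_{n-1}$; the resulting Schur complements are then scalars. The Schur determinant formula gives
\[
|\det(A+B)|=|\det(A_{n-1}+B_{n-1})|\cdot\bigl|(A+B)/(A_{n-1}+B_{n-1})\bigr|,
\]
and by Proposition \ref{prop1} the scalar $(A+B)/(A_{n-1}+B_{n-1})$ lies in $S_\alpha$. Successively applying $|z|\ge \Re z$, Theorem \ref{main1}, and the bound $\Re w\ge \cos(\alpha)|w|$ for $w\in S_\alpha$ (to the scalars $A/A_{n-1}$ and $B/B_{n-1}$, which lie in $S_\alpha$ by Proposition \ref{prop1}), together with $|A/A_{n-1}|=|\det A|/|\det A_{n-1}|$, one obtains
\[
\bigl|(A+B)/(A_{n-1}+B_{n-1})\bigr|\ \ge\ \cos^3(\alpha)\left(\frac{|\det A|}{|\det A_{n-1}|}+\frac{|\det B|}{|\det B_{n-1}|}\right).
\]
Let $R_m$ denote the right-hand side of the theorem at size $m$ stripped of its $\sec$ factor. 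The inductive hypothesis is $|\det(A_{n-1}+B_{n-1})|\ge \cos^{3n-5}(\alpha)\,R_{n-1}$, which combines with the display above to give
\[
|\det(A+B)|\ \ge\ \cos^{3n-2}(\alpha)\,R_{n-1}\!\left(\frac{|\det A|}{|\det A_{n-1}|}+\frac{|\det B|}{|\det B_{n-1}|}\right).
\]

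It then suffices to verify the purely algebraic inequality $R_{n-1}\bigl(|\det A|/|\det A_{n-1}|+|\det B|/|\det B_{n-1}|\bigr)\ge R_n$. Writing $a_k=|\det A_k|$ and $b_k=|\det B_k|$ and distributing, the coefficients of $|\det A|$ and $|\det B|$ on the left absorb those on the right together with leftover cross-terms $\sum_{k=1}^{n-2}\frac{a_kb_{n-1}}{b_ka_{n-1}}|\det A|+\sum_{k=1}^{n-2}\frac{b_ka_{n-1}}{a_kb_{n-1}}|\det B|$, which by pairwise AM--GM contribute $2(n-2)\sqrt{|\det A||\det B|}$. The square-root term of $R_{n-1}$ contributes $(2^{n-1}-2(n-1))\sqrt{a_{n-1}b_{n-1}}(a_n/a_{n-1}+b_n/b_{n-1})\ge 2(2^{n-1}-2(n-1))\sqrt{|\det A||\det B|}$, again by AM--GM. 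The combined constant $2(n-2)+2(2^{n-1}-2(n-1))=2^n-2n$ matches the one in $R_n$. This rearrangement is the same one carried out in \cite{Har73} and presents no new difficulty beyond pedestrian bookkeeping.

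The only delicate point is calibrating the exponent of $\sec(\alpha)$: the $\cos^2$ from Theorem \ref{main1} together with a single extra $\cos$ (only one, since the bound $\Re w\ge \cos(\alpha)|w|$ applied to both summands yields a common factor $\cos\alpha$) produces $\cos^3$ per induction step, which added to the hypothesis' $\cos^{3n-5}$ gives exactly $\cos^{3n-2}$, as required.
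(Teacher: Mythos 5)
Your proposal is correct, and its matrix-analytic core coincides with the paper's: both rest on the single estimate $\sec^3(\alpha)\,|(A_{k+1}+B_{k+1})/(A_k+B_k)|\ge |A_{k+1}/A_k|+|B_{k+1}/B_k|$, obtained from $|z|\ge\Re z$, Theorem \ref{main1}, the scalar case of Lemma \ref{lem4}, and Proposition \ref{prop1}, together with the Schur determinant formula; your exponent bookkeeping ($3n-5+3=3n-2$, base case $\sec^1$) is right. Where you diverge is the purely scalar endgame. You apply the estimate one level at a time inside an induction on $n$ and verify $R_{n-1}\bigl(a_n/a_{n-1}+b_n/b_{n-1}\bigr)\ge R_n$ by expanding and using pairwise AM--GM on the cross-terms, with the count $2(n-2)+2\bigl(2^{n-1}-2(n-1)\bigr)=2^n-2n$ — this is Hartfiel's original scheme from \cite{Har73}, and I checked the coefficient accounting; it closes. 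The paper instead multiplies all $n-1$ telescoping inequalities at once to reduce to the scalar inequality $\prod_{k=1}^n(a_k/a_{k-1}+b_k/b_{k-1})\ge R_n$, which it proves non-inductively (Claim 2, due to O.~Kuba) by expanding $\prod(1+x_k)$ over subsets of $\{1,\dots,n\}$, isolating the $2n$ "interval" subsets that produce the two weighted sums, and applying one global AM--GM to the remaining $2^n-2n$ subsets. The paper's own remark that the author's original inductive proof was "considerably longer" describes essentially your route: both work, yours requires more bookkeeping in the inductive rearrangement, while Kuba's argument makes the provenance of the constant $2^n-2n$ transparent in a single stroke. One small point you leave implicit (as does the paper) is that $W(A_k)\subset W(A)\subset S_\alpha$ for principal submatrices, which is needed both for the inductive hypothesis and for applying Proposition \ref{prop1}; this is standard compression of the numerical range and not a real gap.
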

\begin{proof} Clearly, $(A_{k+1}+B_{k+1})/(A_k+B_k)\in \mathbb{C}$, so $$|(A_{k+1}+B_{k+1})/(A_k+B_k)|\ge \Re \Big((A_{k+1}+B_{k+1})/(A_k+B_k)\Big), \quad k=1, \ldots, n-1.$$ Here we set $A_n=A, B_n=B$. By Proposition \ref{prop1}, $W(A_{k+1}/A_k), W(B_{k+1}/B_k)\subset S_\alpha$; then by  Theorem \ref{main1} and Lemma \ref{lem4},
\begin{eqnarray*}\sec^2(\alpha)\Re \Big((A_{k+1}+B_{k+1})/(A_k+B_k)\Big)&\ge&\Re  (A_{k+1}/A_k)+\Re (B_{k+1}/B_k)\\&\ge&\cos(\alpha)\Big(|A_{k+1}/A_k|+|B_{k+1}/B_k|\Big).
\end{eqnarray*} Hence, $$\sec^3(\alpha)|(A_{k+1}+B_{k+1})/(A_k+B_k)|\ge |A_{k+1}/A_k|+|B_{k+1}/B_k|,$$ that is,
\begin{eqnarray}\label{det1} \sec^3(\alpha)\left|\frac{\det(A_{k+1}+B_{k+1})}{\det(A_k+B_k)}\right|\ge \left|\frac{\det A_{k+1}}{\det A_k}\right|+\left|\frac{\det B_{k+1}}{\det B_k}\right|
\end{eqnarray} for $k=1, \ldots, n-1$.

Taking the product for $k$ from $1$ to $n-1$ in (\ref{det1}) yields
$$\sec^{3(n-1)}(\alpha)|\det(A+B)|\ge |A_1+B_1|\prod_{k=1}^{n-1}\left(\left|\frac{\det A_{k+1}}{\det A_k}\right|+\left|\frac{\det B_{k+1}}{\det B_k}\right|\right).$$
As $|A_1+B_1|\ge \cos(\alpha)(|A_1|+|B_1|)$, we therefore arrive at
\begin{eqnarray*}\sec^{3n-2}(\alpha)|\det(A+B)|&\ge& (|A_1|+|B_1|)\prod_{k=1}^{n-1}\left(\left|\frac{\det A_{k+1}}{\det A_k}\right|+\left|\frac{\det B_{k+1}}{\det B_k}\right|\right)\\&=&\prod_{k=1}^{n}\left(\left|\frac{\det A_k}{\det A_{k-1}}\right|+\left|\frac{\det B_k}{\det B_{k-1}}\right|\right),
\end{eqnarray*} where, by convention,  $\det A_0=\det B_0=1$.

The conclusion follows by taking $a_k=|\det A_k|$, $b_k=|\det B_k|$, $k=0, 1, \ldots, n$, in Claim 2.

~

  {\bf Claim 2.}  Let $a_k, b_k>0$, $k=1, \ldots, n$, also let $a_0=b_0=1$. Then
  \begin{eqnarray*} \prod_{k=1}^n\left(\frac{a_k}{a_{k-1}}+\frac{b_k}{b_{k-1}}\right)&\ge& a_n\left(1+\sum_{s+1}^{n-1}\frac{b_s}{a_s}\right)+b_n\left(1+\sum_{s+1}^{n-1}\frac{a_s}{b_s}\right)\\&&+(2^n-2n)\sqrt{a_nb_n}.
\end{eqnarray*}

  {\it Proof of Claim 2.} The present proof is due to O. Kuba. The orginal proof by the author, which is by induction, is considerably longer.
 Let $\mathbb{N}_n=\{1, 2, \ldots, n\}$, and let $\mathcal{P}(\mathbb{N}_n)$ be the set of subsets of $\mathbb{N}_n$. We consider special subsets $(\mathcal{B}_s)_{1\le s\le n}$ and $(\mathcal{B}'_s)_{2\le s\le n}$ defined by $$\mathcal{B}_s=\{1, 2, \ldots, s\}, \quad  \mathcal{B}'_s=\{s, s+1, \ldots, n\}.$$
 Finally we define $\Omega=\{\emptyset\}\cup \{\mathcal{B}_s: 1\le s\le n\}\cup \{\mathcal{B}'_s: 2\le s\le n\}$ and $\Omega'=\mathcal{P}(\mathbb{N}_n)\setminus \Omega$. Note that $|\Omega'|=2^n-2n$, and that each $k\in \mathbb{N}_n$ belongs to exactly $n$ of the subsets of $\Omega$.

 With this notation, for every $x_1, x_2, \ldots, x_n>0$, we infer that ${\displaystyle\prod_{\mathcal{B}\in \Omega}\prod_{k\in\mathcal{B}}x_k=\prod_{k=1}^n x_k^n}$ and so  ${\displaystyle\prod_{\mathcal{B}\in \Omega'}\prod_{k\in \mathcal{B} }x_k=\prod_{k=1}^n x_k^{2^{n-1}-n}}$, moreover,
 \begin{eqnarray*}\prod_{k=1}^n(1+x_k)&=&\sum_{\mathcal{B}\in \mathcal{P}(\mathbb{N}_n)}\prod_{k\in \mathcal{B}}x_k\\&=&\sum_{\mathcal{B}\in \Omega}\prod_{k\in \mathcal{B}}x_k+\sum_{\mathcal{B}\in \Omega'}\prod_{k\in \mathcal{B}}x_k.
\end{eqnarray*}
 But  \begin{eqnarray*}\sum_{\mathcal{B}\in \Omega}\prod_{k\in \mathcal{B}}x_k=1+\sum_{s=1}^nx_1x_2\cdots x_s+\sum_{s=2}^nx_sx_{s+1}\cdots x_n
\end{eqnarray*} and using the arithemtic mean-geometric mean inequality
  \begin{eqnarray*}\sum_{\mathcal{B}\in \Omega'}\prod_{k\in \mathcal{B}}x_k&\ge& |\Omega'|\left(\prod_{\mathcal{B}\in \Omega'}\prod_{k\in \mathcal{B} }x_k\right)^{1/|\Omega'|}\\&=& (2^n-2n)\left(\prod_{k=1}^n x_k^{2^{n-1}-n}\right)^{1/(2^n-2n)}\\&=& (2^n-2n)\sqrt{x_1x_2\cdots x_n}.
\end{eqnarray*}
So we have
$$\prod_{k=1}^n(1+x_k)\ge 1+\sum_{s=1}^nx_1x_2\cdots x_s+\sum_{s=2}^nx_sx_{s+1}\cdots x_n+(2^n-2n)\sqrt{x_1x_2\cdots x_n}.$$
Taking $x_k=\frac{a_{k-1}b_k}{b_{k-1}a_k}$, for $k=1, \ldots, n$, gives
\begin{eqnarray*}\prod_{k=1}^n\left(1+\frac{a_{k-1}b_k}{b_{k-1}a_k}\right)&\ge&1+\sum_{s=1}^n \frac{b_s}{a_s}+\frac{b_n}{a_n}\sum_{s=2}^n\frac{a_{s-1}}{b_{s-1}}+(2^n-2n)\sqrt{b_n/a_n}\\&=&1+\sum_{s=1}^{n-1}\frac{b_s}{a_s} +\frac{b_n}{a_n}\left(1+\sum_{s=1}^{n-1}\frac{a_s}{b_s}\right)+(2^n-2n)\sqrt{b_n/a_n}.
\end{eqnarray*}
Multiplying both sides of the inequality by ${\displaystyle\prod_{k=1}^n\frac{a_k}{a_{k-1}}=a_n}$ yields the desired inequality. This completes the proof of Claim 2. \end{proof}

Apparently, Theorem \ref{main2} reduces to (\ref{har}) when $\alpha=0$. A matrix $A\in \mathbb{M}_n$ is accretive-dissipative if both $\Re A$, $\Im A$ are positive definite (see \cite{GI05}). Note that if $A$ is accretive-dissipative, then $W(e^{-i\pi/4}A)\subset S_{\pi/4}$. Thus, we have the following corollary.

\begin{cor}   Suppose $A, B\in \mathbb{M}_n$ are accretive-dissipative.  Let $A_k$ and $B_k$, $k = 1, \ldots,  n-1$, denote the $k$-th principal submatrices of $A$ and $B$ respectively. Then
  \begin{eqnarray*} 2^{\frac{3}{2}n-1}|\det(A+B)|&\ge&\left(1+\sum_{k=1}^{n-1}\left|\frac{\det B_k}{\det A_k}\right|\right) |\det A| \\&&  +\left(1+\sum_{k=1}^{n-1}\left|\frac{\det A_k}{\det B_k}\right|\right)|\det B|+(2^n-2n)\sqrt{|\det AB|}.
\end{eqnarray*}
\end{cor}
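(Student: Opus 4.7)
The plan is to obtain the corollary as an immediate specialization of Theorem \ref{main2} to the rotated pair $A' := e^{-i\pi/4}A$ and $B' := e^{-i\pi/4}B$, with sector parameter $\alpha = \pi/4$. The first step is to verify the hypothesis $W(A'),W(B')\subset S_{\pi/4}$; writing $A = H + iK$ with $H,K$ positive definite, a short computation gives $\Re A' = (H+K)/\sqrt 2 > 0$ and $\Im A' = (K-H)/\sqrt 2$, so the quadratic-form inequality $|x^*(K-H)x|\le x^*(H+K)x$ (which reduces to $x^*Hx\ge 0$ and $x^*Kx\ge 0$) is exactly the sector condition at $\alpha=\pi/4$. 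This is the content of the remark stated just before the corollary.

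The second step is to observe that every quantity appearing in the conclusion of Theorem \ref{main2} is insensitive to multiplication by the unimodular scalar $e^{-i\pi/4}$. Indeed, $(A')_k = e^{-i\pi/4}A_k$ gives $|\det (A')_k| = |\det A_k|$, and analogously for $B$; moreover $|\det(A'+B')| = |\det(A+B)|$, $|\det A'B'| = |\det AB|$, and each ratio $|\det B'_k/\det A'_k|$ equals $|\det B_k/\det A_k|$. Hence the right-hand side of Theorem \ref{main2} applied to $(A',B')$ is literally the right-hand side of the corollary.

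The final step is arithmetic: $\sec(\pi/4) = \sqrt 2$, so $\sec^{3n-2}(\pi/4) = 2^{(3n-2)/2} = 2^{\frac{3}{2}n - 1}$, which matches the coefficient on the left. I do not expect any real obstacle here; the substantive work has been done in Theorem \ref{main2} and in the sector-reduction observation preceding the corollary, and the present argument amounts to a change-of-variables bookkeeping.
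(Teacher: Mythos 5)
Your proposal is correct and is exactly the paper's argument: the paper derives the corollary by noting that $W(e^{-i\pi/4}A)\subset S_{\pi/4}$ for accretive-dissipative $A$ and then applying Theorem \ref{main2} with $\alpha=\pi/4$, where $\sec^{3n-2}(\pi/4)=2^{\frac{3}{2}n-1}$. Your verification of the sector condition and of the invariance of all determinant moduli under the unimodular rotation just makes explicit what the paper leaves implicit.
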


\subsection*{Acknowledgments} {\small The author thanks P. van den Driessche and P. Zhang for some helpful remarks. In particular, the present proof of Claim 2 is due to O. Kuba.  The author is currently a PIMS Postdoctoral Fellow at the University of Victoria. The support of PIMS is gratefully acknowledged}.

\end{document}